\newcommand{\R}{\ensuremath{\mathbb{R}}}
\newcommand{\CS}{\ensuremath{\mathcal{S}}}
\newcommand{\CF}{\ensuremath{\mathcal{F}}}
\newcommand{\la}{\lambda}
\newcommand{\f}{\varphi}
\newcommand{\x}{\mathbf{x}}
\newcommand{\sgn}{\mathrm{sign}}
\newcommand{\tang}{\mathrm{tan}}
\newcommand{\vs}{\vspace{0,5cm}}
\def\p{\partial}
\def\e{\varepsilon}
\newtheorem {theorem} {Theorem}%[section]
\newtheorem {definition} {Definition}
\newtheorem {proposition} {Proposition}
\newtheorem {example} {Example}
\newtheorem {remark} {Remark}
\begin{document}
\renewcommand{\arraystretch}{1.5}

\title[Sliding mode on tangential sets of Filippov systems]
{Sliding mode on tangential sets\\ of Filippov systems} 
\author[T. Carvalho, D. D. Novaes and D. J. Tonon]
{Tiago Carvalho$^1,$ Douglas D. Novaes$^2$, and Durval J. Tonon$^3$}

\address{$^1$ Departamento de Computaç\~{a}o e Matem\'{a}tica, Faculdade de Filosofia, Ci\^{e}ncias e Letras de Ribeirão Preto,
	USP, Av. Bandeirantes, 3900, CEP 14040-901, Ribeirão Preto, SP, Brazil.}\email{tiagocarvalho@usp.br}

\address{$^2$ Departamento de Matem\'{a}tica - Instituto de Matem\'{a}tica, Estat\'{i}stica e Computa\c{c}\~{a}o Cient\'{i}fica (IMECC) - Universidade
Estadual de Campinas (UNICAMP),  Rua S\'{e}rgio Buarque de Holanda, 651, Cidade Universit\'{a}ria Zeferino Vaz, 13083-859, Campinas, SP,
Brazil} \email{ddnovaes@unicamp.br}

\address{$^3$ Instituto de Matem\'atica e Estat\'istica, Universidade Federal de Goi\'as,\\
	Campus Samambaia, CEP 74690-900, Goi\^ania, GO, Brazil} \email {djtonon@ufg.br}

\subjclass[2010]{34A36, 34A60, 34E15}

\keywords{Filippov systems, sliding mode, tangential set, regularization, singular perturbation problem}

\maketitle

\begin{abstract}
We consider piecewise smooth vector fields $Z=(Z_+, Z_-)$ defined in $\R^n$ where both vector fields are tangent to the switching manifold $\Sigma$ along a submanifold $M\subset \Sigma$. We shall see that, under suitable assumptions, Filippov convention gives rise to a unique sliding mode on $M$, governed by what we call the {\it tangential sliding vector field}. 
Here, we will provide the necessary and sufficient conditions for characterizing such a vector field. Additionally, we prove that the tangential sliding vector field is conjugated to the reduced dynamics of a singular perturbation problem arising from the Sotomayor-Teixeira regularization of $Z$ around $M$. Finally, we analyze several examples where tangential sliding vector fields can be observed, including a model for intermittent treatment of HIV.
\end{abstract}

%\tableofcontents

\section{Introduction}

In this paper, we consider piecewise smooth vector fields (PSVFs) in two zones defined on $U \subset \mathbb{R}^n$ for which there exists a switching codimension-one manifold $\Sigma$ separating $U$ into two parts $\Sigma_+$ and $\Sigma_-$, such that smooth vector fields $Z_+$ and $Z_-$ are defined in each one of these parts.

Nowadays, there is a vast literature about PSVFs and their applications. A non-exhaustive list of books concerning this theme includes \cite{Brogliato, diBernardo-livro, F, Leine, Simpson, Utkin2009}. Also, papers like \cite{TiagoRonyDurvalLuiz-HIV, Carvalho2020, CarCrisPagTon-PhysicaD-2017, RMCGslidingmode, Jac-To, Kousaka, RMCG2019, TG1, TG2} deal with applications of such theory to real-world phenomena.

Certainly, Filippov's book \cite{F} is one of the most valuable texts concerning PSVFs, where the definition for solutions of discontinuous differential equations is established by means of differential inclusions (see Section \ref{Secao-Preliminares}). Several scenarios of low codimension were deeply investigated by Filippov, which gave rise to the notions of sliding, escaping, and crossing solutions. Along these solutions, the smooth vector fields $Z^+$ and $Z^-$ are transversal to the switching manifold $\Sigma$. Filippov also investigated the dynamics around isolated tangencies between $\Sigma$ and the vector fields $Z^+$ and $Z^-$. However, he has not further explored the differential inclusions when both vector fields $Z_+$ and $Z_-$ are simultaneously tangent to $\Sigma$ along a submanifold $M \subset \Sigma$ of dimension greater or equal to one. Consequently, much of the subsequent development also dealt with the dynamics around isolated tangencies (see, for instance, \cite{CarTeiTon-CuspFoldZAMP, Jeffrey-T-sing, J-T-T2, Novaes2022, Novaes2021, Teixeira1990}), while the dynamics on higher-dimensional tangential manifold remained unexplored.

\subsection{Main goals and results}
In the present paper, we aim to explore the dynamics of Filippov systems on higher-dimensional tangential manifolds.

First, we shall show that, under suitable assumptions, the Filippov convention gives rise to a unique sliding mode on $M$, which is governed by what we call the {\it tangential sliding vector field}. In this direction, Theorem \ref{theo1} provides the necessary and sufficient conditions for the existence of tangential sliding vector fields, which are then formalized in Definition \ref{definicao-campo-tagencial}. Theorem \ref{theo2} establishes that the trajectories of a tangential sliding vector field correspond to solutions of the Filippov differential inclusion.

We shall also investigate how the sliding mode provided by the tangential sliding vector field behaves under Sotomayor-Teixeira regularization. Smoothing processes of PSVF are a good ally in understanding the dynamics and applicability of nonsmooth models. Regarding Filippov systems, Sotomayor-Teixeira regularization, introduced in \cite{Regularizacao}, is intrinsically related to Filippov convention for sliding solutions. Indeed, it was proven in \cite{TEIXEIRA20121948} that the Sotomayor-Teixeira regularization of a Filippov system around a sliding set produces a singular perturbation problem with reduced dynamics conjugated to the sliding dynamics. Also, a similar relation for sliding dynamics with hidden terms was obtained in \cite{Novaes15} by allowing regularizations with non-monotonic transition functions (see also \cite{silvamezanova}). Since Sotomayor-Teixeira's paper \cite{Regularizacao}, isolated tangencies have also been approached by means of regularizations (see, for instance, \cite{BonLarSea,BoneSea,NovRond}), but, again, higher-dimensional tangential manifolds have not been considered. In this regard, Theorem \ref{teorema-regulariado} shows that the tangential sliding vector field is conjugated to the reduced dynamics of a singular perturbation problem arising from the Sotomayor-Teixeira regularization of $Z$ around $M$.

Finally, we analyze several examples where tangential sliding vector fields can be observed, including an applied Filippov model for intermittent treatment of HIV.

\subsection{Structure of the paper}
Section \ref{Secao-Preliminares} provides the basic concepts and notions needed in this paper concerning Filippov systems, regularization processes, and singular perturbation problems. Section \ref{Secao-Existencia-CampoTangencial} presents our first main results, Theorems \ref{theo1} and \ref{theo2}, regarding the definition of tangential sliding vector fields. Section \ref{sec:examples} provides several examples where tangential sliding vector fields can be observed, including an applied Filippov model for intermittent treatment of HIV. Finally, Section \ref{Secao-CampoTangencialRegularizado} is dedicated to proving our third main result, Theorem \ref{teorema-regulariado}, about the regularization of tangential sliding vector fields.

\section{Basic theory}\label{Secao-Preliminares}
This section is devoted to discuss the basic notions needed in this paper. 

\subsection{Piecewise smooth vector fields and Filippov convention}
Let $h: U \subset \R^n \rightarrow \R$ be a smooth function with $0$ as a regular value. Denote $\Sigma = h^{-1}(0)$, $\Sigma_+ = \{ x \in U \mid h(x) \geq 0 \}$, and $\Sigma_- = \{ x \in U \mid h(x) \leq 0 \}$. Consider $Z_+$ a smooth vector field defined in $\Sigma_+$ and $Z_-$ a smooth vector field defined in $\Sigma_-$. A PSVF $Z=(Z_+,Z_-)$ defined in $U \subset \R^n$ is given by
\begin{equation}\label{sistema-descontinuo}
Z(x)= \left\lbrace \begin{array}{cl}
Z_+(x), & h(x) \geq 0, \\
Z_-(x), & h(x) \leq 0.
\end{array} \right.
\end{equation}
For points in $\Sigma_+ \setminus \Sigma$ and $\Sigma_- \setminus \Sigma$, the local trajectories of \eqref{sistema-descontinuo} are given by $Z_+$ and $Z_-$. For points in $\Sigma$, we get that $Z$ is multi-valued and a non-classical theory must be considered. In fact, the notion of local trajectories for PSVF \eqref{sistema-descontinuo} was stated by Filippov \cite{F} as solutions of the following differential inclusion
\begin{equation}\label{FZ}
\dot p\in\CF_Z(p)=\dfrac{Z_+(p)+Z_-(p)}{2}+\sgn(h(p))\dfrac{Z_+(p)-Z_-(p)}{2},
\end{equation}
where
\[
\sgn(s)=\left\{
\begin{array}{ll}
-1&\text{if}\,\, s<0,\\

[-1,1]&\text{if}\,\, s=0,\\
1&\text{if}\,\, s>0.
\end{array}\right.
\]
A solution of the differential inclusion \eqref{FZ} is defined as an absolutely continuous function $\f(t)$ such that $\f'(t) \in \CF_Z(\f(t))$ for almost every $t$.
This approach is called Filippov convention. A PSVF \eqref{sistema-descontinuo} is called a Filippov system when it is governed by Filippov convention. For more information on differential inclusions, see, for instance, \cite{Smirnov02}.

A class of solutions of the differential inclusion \eqref{FZ} was deeply explored by Filippov in \cite{F} and, due to its wide range of application, continues to receive significant attention. In order to better understand the class of solutions of the differential inclusion \eqref{FZ} explored by Filippov, we must determine the contact of the trajectories of $Z_+$ and $Z_-$ with $\Sigma$. For this purpose, consider the Lie derivatives $Z_+h(p) = \left\langle \nabla h(p), Z_+(p) \right\rangle$ and $Z_+^ih(p) = \left\langle \nabla Z_+^{i-1} h(p), Z_+(p) \right\rangle$ for $i \geq 2$, where $\left\langle . , . \right\rangle$ is the usual inner product and $\nabla h(p)$ denotes the gradient of the function $h$ at $p$. The same for $Z_-$.

We distinguish the following regions on the discontinuity set $\Sigma$:

\begin{itemize}
    \item Crossing Region: $\Sigma^c = \{ p \in \Sigma \mid Z_+h(p) \, Z_-h(p) > 0 \}$. Moreover, we let $\Sigma^{c+} = \{ p \in \Sigma \mid Z_+h(p) > 0, Z_-h(p) > 0 \}$ and $\Sigma^{c-} = \{ p \in \Sigma \mid Z_+h(p) < 0, Z_-h(p) < 0 \}$.
    
    \item Sliding Region: $\Sigma^{s} = \{ p \in \Sigma \mid Z_+h(p) < 0, Z_-h(p) > 0 \}$.
    
    \item Escaping Region: $\Sigma^{e} = \{ p \in \Sigma \mid Z_+h(p) > 0, Z_-h(p) < 0 \}$.
\end{itemize}

A point point $q \in \Sigma$ satisfying $(Z_+h(q))(Z_-h(q)) = 0$ and $Z_{\pm}(q) \neq 0$ is called a {\it tangential singularity} (or also a {\it tangency point}). We denote by $\Sigma^{\tang}$ the set of tangential singularities. According to \cite{V} (see also \cite{castro21}), we say that a smooth vector field $Z_{\pm}$ presents a generic contact of multiplicity $k$ with $\Sigma$ at $p$ if $Z_{\pm}^{r}h(p) = 0$ for $r = 1, \dots, k-1$, and $(Z_{\pm})^{k}h(p) \neq 0$.

Despite the richness of the book \cite{F}, Filippov does not further explore solutions of the differential inclusion \eqref{FZ} that may live on $\Sigma^{\tang}$.

\subsection{Regularization and singular perturbation problem}\label{secao regularizacao}
The concept of $\phi-$re\-gu\-la\-rization of PSVFs was introduced by Sotomayor and Teixeira in \cite{Regularizacao}. It provides a $1$-parameter family of smooth vector fields $Z_{\e}$ such that, for each $\e > 0$, $Z_{\e}$ is equal to $Z_{\pm}$ in all points of $\Sigma_{\pm}$ whose distance to $\Sigma$ is greater than or equal to $\e$. In what follows, we provide the formal definition.

\begin{definition}
A function $\phi:\mathbb{R} \rightarrow \mathbb{R}$ is a said to be a $C^{r}$-transition function, $r\geq 0$, if it is of class $C^{r}$, $\phi(x)=-1$ for $x\leqslant -1$, $\phi(x)=1$ for $x\geqslant 1$, and $\phi'(x)>0$ if $x\in(-1,1)$. The $\phi$-regularization of $Z=(X,Y)$ is the $1$-parameter family $Z_{\e}$ given by
\begin{equation}\label{regularization}
Z_{\e}(q)=\left(\frac{1}{2}+\frac{\phi_{\e}(h(q))}{2}\right)Z_+(q) +\left(\frac{1}{2}-\frac{\phi_{\e}(h(q))}{2}\right) Z_-(q),
\end{equation}
with $q\in U$ and $\phi_{\e}(x)=\phi(x/\e)$, for $\e>0$.
\end{definition}

The differential equation 
 $$\dot{q}=Z_{\e}(q)$$ can be studied in terms of \textit{geometric singular perturbation theory} \cite{fenichel,jones}.

\begin{definition}\label{spproblem} Let $U\subseteq  \R^n$ be an open subset and take $\e\geqslant 0$.
	A singular perturbation problem in $U$ (SP-Problem) is a
	differential system which can be  written as
	\begin{equation}
		\label{fast} x'=dx/d\tau=l(x,y,\e),\quad  y'=dy/d\tau=\e m(x,y,\e)
	\end{equation}with $x\in \R$, $y \in \R^{n-1}$ and $l,m$ smooth in all variables, or equivalently, after the time rescaling $t=\e\tau,$
	\begin{equation}
		\label{slow} \e{\dot x}=\e dx/dt=l(x,y,\e),\quad {\dot y}=dy/dt=
		m(x,y,\e).
	\end{equation}
\end{definition}

The differential system \eqref{fast} is called the \textit{fast system}, and the differential system
\eqref{slow} is called the \textit{slow system} of the SP-problem. Note that
for $\e >0$, the phase portraits of the fast and the slow systems
coincide.

By taking  $\varepsilon =0$ in \eqref{fast}, we get the {\it layer problem}
\begin{equation}
\label{layer} x'=dx/d\tau=l(x,y,0),\quad  y'=0.
\end{equation}
Accordingly, the {\it slow set} $\mathcal{S}$ of the SP-problem is defined as the critical points of \eqref{layer}, that is,
\[
\label{SM} \mathcal{S}=\left\lbrace (x,y):l(x,y,0)=0\right\rbrace.
\]
By taking  $\varepsilon =0$ in \eqref{slow}, we get the {\it reduced problem}
\begin{equation}
\label{reduced} 0=l(x,y,\e),\quad {\dot y}=dy/dt=
m(x,y,0),
\end{equation}
which induces a dynamics on $\mathcal{S}$.

\section{Tangential Sliding Vector Field}\label{Secao-Existencia-CampoTangencial}

Let $M\subset \Sigma^{\tang}$ be described by
\[
M = \eta^{-1}(0),
\]
where $\eta: V\subset \R^n \rightarrow \R^m$, with $m<n$ and $V\subset U$ open, is a smooth function with $0 \in \R^m$ as a regular value. 
The next result is a well-known fact concerning Differential Geometry (see \cite[Corollary 5.24 and Lemma 5.29]{Lee-book}):
\begin{proposition}\label{prop:GD}
If $\eta: \R^n \rightarrow \R^m$, where $m<n$, is a smooth function with $0 \in \R^m$ as a regular value, then $M = \eta^{-1}(0)$ is a codimension $m$ submanifold, and
\[  
T_{p}M = \text{ker}(d\eta(p)) \,\, \forall p \in M,
\]
where $d\eta$ is the differential of $\eta$.
\end{proposition}

In what follows, we shall study necessary and sufficient conditions for the existence of a tangential sliding vector field on $M$. For each $p \in \Sigma$, define 
\[
C_p = \{C(p, \lambda) : \lambda \in [-1, 1]\},
\]
where
\begin{equation}\label{eq-combinacao-convexa}
C(p, \lambda) = \frac{(1 - \lambda)}{2} Z_+(p) + \frac{(1 + \lambda)}{2} Z_-(p).
\end{equation}
For $p \in M$, the tangential sliding vector field will be constructed by means of the intersection of the set $C_p$ with the tangent space of $M$ at $p$, $T_p M$. Accordingly, the next result provides necessary and sufficient conditions for $C_p \cap T_{p} M \neq \emptyset$.

\begin{theorem}\label{theo1}
Consider $p \in M$. Thus, $C_p\cap T_{p}  M \neq\emptyset$ if, and only if,
\begin{equation}\label{fundcond}
\left\langle  d\eta(p)   Z_+(p) ,  d\eta(p)   Z_-(p)  \right\rangle=-\|d\eta(p)   Z_+(p)\| \| d\eta(p)   Z_-(p)\|.
\end{equation}
Moreover,
\begin{itemize}
\item [$1.$] If $\|d\eta(p)   Z_+(p)\| \| d\eta(p)   Z_-(p)\|\neq0$, then $C_p\cap T_{p} M =\{C(p,\la^*(p))\}$ with
\begin{equation}\label{eq lambda star}\displaystyle\lambda^*(p):=\frac{\| d\eta(p)   Z_+(p) \| - \| d\eta(p)   Z_-(p) \|}{\| d\eta(p)   Z_+(p) \| + \| d\eta(p)   Z_-(p) \|}\in(-1,1).\end{equation}

\smallskip

\item [$2.$] If $d\eta(p)  Z_-(p) = 0$ and $d\eta(p)  Z_+(p) \neq 0,$  then $C_p\cap T_{p} M =\{Z_-(p)\}$.

\smallskip

\item [$3.$] If $d\eta(p)  Z_+(p) = 0$ and $d\eta(p)  Z_-(p) \neq 0,$  then $C_p\cap T_{p} M =\{Z_+(p)\}$.

\smallskip

\item [$4.$] If  $d\eta(p)  Z_-(p) =d\eta(p)  Z_+(p)= 0,$ then  $C_p\subset T_{p} M $.
\end{itemize}

\end{theorem}

\begin{proof}
Since, from Proposition \ref{prop:GD}, $T_{p}  M  = \text{ker}(d\eta(p))$, we have
\[
C(p, \lambda) \in T_{p} M \iff d\eta(p) \cdot C(p, \lambda) = 0,
\]
replacing the expression \eqref{eq-combinacao-convexa} we obtain
\begin{equation}\label{eq igualdade vetores}
\frac{(1- \lambda)}{2} d\eta(p) Z_+(p) = -\frac{(1+ \lambda)}{2} d\eta(p) Z_-(p).
\end{equation}
Taking into account that $\lambda \in [-1, 1]$, the relationship \eqref{eq igualdade vetores} means that the vectors $d\eta(p) Z_+(p)$ and $d\eta(p) Z_-(p)$ are proportional with a negative proportionality constant. Hence, $C_p \cap T_{p}  M \neq \emptyset$ if, and only if, \eqref{fundcond} holds.

\bigskip

\noindent {\it Case 1.} Assuming that $\|d\eta(p) Z_+(p)\| \| d\eta(p) Z_-(p)\| \neq 0$, we get that \eqref{eq igualdade vetores} holds if, and only if, $\lambda = \lambda^*(p)$, where $\lambda^*(p)$ is given by \eqref{eq lambda star}. Notice that, in this case, $\lambda^*(p) \in (-1, 1)$.

\bigskip

\noindent {\it Cases 2 and 3.} Assuming that $d\eta(p) Z_-(p) = 0$ and $d\eta(p) Z_+(p) \neq 0$, the relationship \eqref{eq igualdade vetores} reduces to $(1-\lambda) d\eta(p) Z_+(p) = 0$, which holds if, and only if, $\lambda = 1$. In this case, $C_p \cap T_{p} M = \{Z_-(p)\}$. In an analogous way, for Case 3 we conclude that $C_p \cap T_{p} M = \{Z_+(p)\}$.

\bigskip

\noindent {\it Cases 4.} Assuming $d\eta(p) Z_-(p) = d\eta(p) Z_+(p) = 0$, the relationship \eqref{eq igualdade vetores} holds for every $\lambda \in [-1, 1]$. Hence, $C_p \subset T_{p} M$.
\end{proof}

In what follows, considering Theorem \ref{theo1}, we provide the definition of tangential sliding vector fields.

\begin{definition}\label{definicao-campo-tagencial}
Assume that 
\begin{equation}\label{cond}
\left\langle  d\eta(p)   Z_+(p) ,  d\eta(p)   Z_-(p)  \right\rangle=-\|d\eta(p)   Z_+(p)\| \| d\eta(p)   Z_-(p)\|\neq0
\end{equation}
for every $p\in M $. The \textbf{tangential sliding vector field}, $Z^{\tang}_M:M\to TM$, of $Z$ on $M$ is defined by 
\begin{equation}\label{eq-campo-tangencial}
Z^{\tang}_M(p):=C(p,\lambda^*(p))= \frac{ (1- \lambda^*(p))}{2} Z_+(p) +  \frac{ (1 + \lambda^*(p))}{2} Z_-(p),
\end{equation}
where $\lambda^*(p)\in(-1,1)$ is given by \eqref{eq lambda star}.
\end{definition}

\begin{remark}
Definition \ref{definicao-campo-tagencial} provides a tangential sliding vector field $Z^{\tang}_M$ on a submanifold $M \subset \Sigma$ for which the conditions of Case 1 of Theorem \ref{theo1} hold. Such a vector field can be extended for points on the boundary of $M$ for which the conditions of Cases 2 and 3 of Theorem \ref{theo1} hold, but not for Case 4. That is because, under condition \eqref{fundcond} of Theorem \ref{theo1}, $C_p \cap T_p M$ is not single-valued only in Case 4. In addition, Cases 2 and 3 are limiting scenarios of Case 1. 
\end{remark}

Notice that a trajectory $\phi : I \to M$ of the tangential sliding vector field \eqref{eq-campo-tangencial} satisfies
\[
\f'(s)=Z_M^{\tang}(\f(s))=C(\f(s),\lambda^*(\f(s)))\in \CF_Z(\f(s)),
\]
for every $s \in I$. Accordingly, we have proven the following result providing that trajectories of the tangential sliding vector field \eqref{eq-campo-tangencial} constitute, indeed, a class of solutions of the differential inclusion \eqref{FZ} and, therefore, a class of Filippov trajectories of the PSVF \eqref{sistema-descontinuo}:
\begin{theorem}\label{theo2}
Any trajectory of the tangential sliding vector field \eqref{eq-campo-tangencial} is a solution of the differential inclusion \eqref{FZ}.
\end{theorem}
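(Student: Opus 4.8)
The plan is to simply unwind the definitions: I would show that a trajectory of $Z_M^{\tang}$ is absolutely continuous and that its velocity at each instant lies in the Filippov set $\CF_Z$ evaluated at the corresponding point, which is exactly what it means to be a solution of \eqref{FZ}.

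First I would record that, under the standing hypothesis \eqref{cond}, the scalar $\lambda^*:M\to(-1,1)$ given by \eqref{eq lambda star} is well defined, since the denominator $\|d\eta(p)Z_+(p)\|+\|d\eta(p)Z_-(p)\|$ never vanishes on $M$, and it depends smoothly on $p$; consequently $Z_M^{\tang}$ is a (smooth, in particular continuous) vector field on $M$. Hence any trajectory $\f:I\to M$ is of class $C^1$, in particular absolutely continuous, and satisfies $\f'(s)=Z_M^{\tang}(\f(s))=C(\f(s),\lambda^*(\f(s)))$ for every $s\in I$.

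Next I would fix $s\in I$ and set $p=\f(s)$. Since $M\subset\Sigma^{\tang}\subset\Sigma=h^{-1}(0)$, we have $h(p)=0$, so $\sgn(h(p))=[-1,1]$ and therefore
\[
\CF_Z(p)=\left\{\dfrac{Z_+(p)+Z_-(p)}{2}+\mu\,\dfrac{Z_+(p)-Z_-(p)}{2}\ :\ \mu\in[-1,1]\right\}.
\]
On the other hand, rewriting \eqref{eq-combinacao-convexa} gives $C(p,\lambda)=\frac{Z_+(p)+Z_-(p)}{2}-\lambda\,\frac{Z_+(p)-Z_-(p)}{2}$, so choosing $\mu=-\lambda^*(p)$, which belongs to $(-1,1)\subset[-1,1]$ because $\lambda^*(p)\in(-1,1)$, shows $C(p,\lambda^*(p))\in\CF_Z(p)$.

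Combining the two steps yields $\f'(s)=C(\f(s),\lambda^*(\f(s)))\in\CF_Z(\f(s))$ for every $s\in I$, hence in particular for almost every $s\in I$; together with the absolute continuity of $\f$ this is precisely the definition of a solution of the differential inclusion \eqref{FZ}. I do not expect a genuine obstacle here: the only points deserving a word of care are the regularity of $\f$ (handled by \eqref{cond}, which makes $Z_M^{\tang}$ continuous) and the fact that the inclusion parameter $\mu$ ranges over the full interval $[-1,1]$, which accommodates $-\lambda^*(p)$ since $|\lambda^*(p)|<1$.
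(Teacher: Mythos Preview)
Your proposal is correct and follows the same approach as the paper: the paper's argument is the single observation, placed just before the theorem, that $\f'(s)=Z_M^{\tang}(\f(s))=C(\f(s),\lambda^*(\f(s)))\in\CF_Z(\f(s))$ for every $s\in I$, and you have simply unpacked this by explicitly identifying $\CF_Z(p)$ on $\Sigma$ as $\{C(p,\lambda):\lambda\in[-1,1]\}$ and noting $\lambda^*(p)\in(-1,1)$. Your added remarks on the smoothness of $\lambda^*$ and the absolute continuity of $\f$ make the verification more complete but do not alter the route.
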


\section{Some examples of tangential sliding vector fields}\label{sec:examples}

The function $\eta$, which describes the manifold $M$ contained in $\Sigma^{\tang}$, and the tangential sliding vector field $Z^{\tang}_M$, depend on the multiplicity of the tangential contact between $Z+$ and $Z_-$ with $\Sigma$ at points of $M$. This section is devoted to exploring several examples where the function $\mathbf{\eta}$ can be obtained in terms of $h$ and the Lie derivatives $Z_+^i h$.

\smallskip

\subsection{Example 1: Multiplicity 2 tangential manifold}\label{Exemplo-Dobra-dobra} In this first example, we investigate the sliding tangential vector field defined on a manifold of  tangential points multiplicity $2$.

Consider the PSVF $Z=(Z_+, Z_-):  \mathbb{R}^4 \rightarrow \mathbb{R}^4$ where 
	\[
	\begin{array}{ll}
		Z_+(x_1,x_2,x_3,x_4)  =&(a_1, a_2, a_3, a_4 x_1),\\
		Z_- (x_1,x_2,x_3,x_4) =&(b_1, b_2, b_3, b_4 x_1),
	\end{array}
	\]
	and the switching manifold is given by $h(x_1, \dots , x_4)=x_4$. Computing the Lie derivatives, we get
	\[
	\begin{array}{ll}
		Z_+h(x_1, x_2, x_3,0)=a_4x_1,   & Z_-h(x_1, x_2, x_3,0)=b_4x_1, \\
		Z_+^2h(x_1, x_2, x_3,0)=a_1a_4, &Z_-^2h(x_1, x_2, x_3,0)=b_1b_4. 
	\end{array}
	\]
	We assume that $a_1\, a_4\, b_1\, b_4\neq 0$.
	So, the manifold $M\subset \Sigma^{\tang} \subset \Sigma \subset \mathbb{R}^4$ that corresponds to the tangential points is given by $M=\eta^{-1}(0)$, where 
	\[
	\eta(x_1, x_2, x_3, x_4)=(x_1, x_4).
	\]
	Notice that $d\eta(p)\, Z_+(p)=(0, a_1)$ and $d\eta(p)\, Z_-(p)=(0, b_1)$. Thus, condition \eqref{cond} of Definition \ref{definicao-campo-tagencial} is satisfied provided that $a_1\, b_1<0$.
	Accordingly, the tangential vector field is given by
\begin{equation}\label{Exemplo-CampoTangencial-DobraR4}
Z^{\tang}_M(x_2, x_3)=\Big(0,\dfrac{a_2 \left| b_1\right| +b_2 \left| a_1\right| }{\left| b_1\right| +\left| a_1\right| }, \frac{a_3 \left| b_1\right| +b_3 \left| a_1\right| }{\left| b_1\right| +\left| a_1\right| },0\Big).
\end{equation}

\subsection{Example 2: Higher order multiplicity tangential manifold}
It is possible for a PSVF, $Z=(Z_+,Z_-),$ to have tangential points with distinct multiplicities in such a way that, for each multiplicity $i$, a manifold $M_i\subset\Sigma^{\tang},$ constituted by the tangential points of multiplicity $i,$ can be defined. In this case, a tangential sliding vector field can be defined on each manifold $M_i$. Accordingly, in the next example we consider a class of PSVFs $Z=(Z_+, Z_-)$ defined in $\R^n$ for which both vector fields $Z_{\pm}$ have a generic contact of multiplicity $m, l\leq n$.

Consider the PSVF $Z=(Z_+, Z_-):  \R^n \rightarrow \R^n$ where 
\[
\begin{array}{ll}
Z_+(\bf{x})  =&(a_1 x_2, a_2 x_3, \dots, a_{m} x_{m+1}, \dots,  a_{l-2}x_{l-1}, a_{l-1}, \dots, a_{n-1}, x_1)\\
Z_-(\bf{x})  =&(b_1 x_2, b_2 x_3, \dots, b_{m-2} x_{m-1}, b_{m-1}, \dots, b_{n-1}, x_1),
\end{array}
\]
with $a_i\, b_j \neq 0, i=1, \dots, l-1, j=1, \dots, m-1$ and the switching manifold given by $h({\bf x})=x_n$. Here, ${\bf x}=(x_1,\ldots,x_n)$. 
Notice that the origin is a contact point of $Z_+$ and  $Z_-$ with $\Sigma$ of multiplicity $l$ and $m$, respectively. Assume $l>m$. Computing the Lie derivatives, we get
\[
\begin{array}{ll}
Z_+h({\bf x})=x_1,   & Z_-h({\bf x})=x_1, \\
Z_+ ^2h({\bf x})=a_1x_2,           &Z_- ^2h({\bf x})=b_1x_2,\\
\vdots & 	\vdots\\
Z_+ ^{m-1}h({\bf x})=a_1\dots a_{m-2}x_{m-1}, & Z_- ^{m-1}h({\bf x})=b_1\dots b_{m-2}x_{m-1}, \\
Z_+ ^{m}h({\bf x})=a_1\dots a_{m-1}x_{m}, & Z_- ^{m}h({\bf x})=b_1\dots b_{m-1}\neq 0,\\
Z_+ ^{m+1}h({\bf x})=a_1\dots a_{m}x_{m+1}, & \\
\vdots & 	\\
Z_+ ^{l-1}h({\bf x})=a_1\dots a_{l-2}x_{l-1}, & \\
Z_+ ^{l}h({\bf x})=a_1\dots a_{l-1}\neq 0. & 	\end{array}
\]
The manifold $M_2\subset \Sigma^{\tang}$, composed of the tangential points of multiplicity $2$ is given by $\eta_{2}^{-1}(0)$, where $\eta_2:\R^n\setminus\{\x:\,x_2=0\}\to\R^2$ is given by
\[
\eta_{2}(x_1, \dots, x_n)=(h({\bf x}), Z_+h({\bf x}))=(x_n, x_1).
\]
Notice that for $p_2\in M_2$, $d\eta_{2}(p_2)\, Z_+(p_2)=(0, a_1 x_2)$ and $d\eta_{2}(p_2)\, Z_-(p)= (0, b_1x_2)$. Thus, condition \eqref{cond} of Definition \ref{definicao-campo-tagencial} is satisfied provided that $a_1\, b_1<0$.
Accordingly, the tangential vector field defined on $M_2$ is given by
\[
\begin{aligned}
Z_{M_2}^{\tang}(p_2)    =& \dfrac{|b_1|}{|a_1|+|b_1|} Z_+(p_2) + \dfrac{|a_1|}{|a_1|+|b_1|} Z_-(p_2)\\\\
=&\dfrac{1}{|a_1|+|b_1|}\Big( 0,x_3(a_2|b_1|+b_2|a_1|), \dots, x_{m-1}(a_{m-2}|b_{1}| + b_{m-2}|a_1|), \\\\
& x_m a_{m-1}|b_1|+b_{m-1}|a_1|, \dots, a_{n-1}|b_1|+b_{n-1}|a_1|,0 \Big).
\end{aligned}
\]
Analogously, the manifold $M_3\subset \Sigma^{\tang}$ composed of the tangential points of multiplicity $3$ is given by $\eta_{3}^{-1}(0)$, where $\eta_3:\R^n\setminus\{\x:\,x_3=0\}\to\R^3$ is given by
\[
\eta_{3}(x_1, \dots, x_n)=(h({\bf x}), Z_+h({\bf x}), Z_+ ^2h({\bf x}))=(x_n, x_1, a_1x_2).
\]
Notice that for $p_3\in M_3$, $d\eta_{3}(p_3)\, Z_+(p_3) = (0, 0, a_1 a_2 x_3)$ and $d\eta_{3}(p_3)\, Z_-(p)= (0,0, a_1b_2 x_3)$. Thus, condition \eqref{cond} of Definition \ref{definicao-campo-tagencial} is satisfied provided that $a_2\, b_2<0$.
Accordingly, the tangential vector field defined on $M_3$ is given by
\[
\begin{aligned}
Z_{M_3}^{\tang}(p_3)   =& \dfrac{|b_2|}{|a_2|+|b_2|} Z_+(p_2) +  \dfrac{|a_2|}{|a_2|+|b_2|} Z_-(p_2)\\\\
=&\dfrac{1}{|a_2|+|b_2|}\Big( 0, 0, x_4(a_3|b_2|+b_3|a_2|), \dots, x_{m-1}(a_{m-2}|b_{2}| + b_{m-2}|a_2|), \\\\
& x_m a_{m-1}|b_2| + b_{m-1}|a_2|, \dots, a_{n-1}|b_2| + b_{n-1}|a_2|,0 \Big).
\end{aligned}
\]
In general, the manifold $M_{m}\subset \Sigma^{\tang}$ composed of the tangential points of multiplicity $m$ is given by $\eta_{m}^{-1}(0)$, where $\eta_{m}:\R^n\setminus\{\x:\,x_{m}\geq 0\}\to\R^m$ is given by
\[
\begin{aligned}
\eta_{m}(x_1, \dots, x_n)  =&(h({\bf x}), Z_+h({\bf x}), Z_+ ^2h({\bf x}), \dots, Z_+ ^{m-1}h({\bf x}))\\
=&(x_n, x_1, a_1x_2, \dots, a_1 \dots a_{m-2}x_{m-1}).
\end{aligned}
\]
Notice that for $p_{m}\in M_{m}$, $d\eta_{m}(p_{m})\, Z_+(p_{m}) = (0, \dots, 0, a_1 a_2 \dots  a_{m-1}x_{m})$ and $d\eta_{m}(p_{m})$ $Z_-(p_{m})= (0,\dots, 0, a_1 a_2 \dots a_{m-2} b_{m-1})$. Thus, condition \eqref{cond} of Definition \ref{definicao-campo-tagencial} is satisfied provided that $a_{m-1}\, b_{m-1}>0$.
Accordingly, the tangential vector field defined on $M_{m}$ is given by
\[
\begin{aligned}
Z_{M_{m}}^{\tang}(p_{m})   =& \dfrac{|b_{m-1}|}{|a_{m-1}x_{m}| + |b_{m-1}|} Z_+(p_m) +  \dfrac{|a_{m-1}x_{m}|}{|a_{m-1}x_{m}| + |b_{m-1}|} Z_-(p_m)\\\\
	=& \dfrac{1}{b_{m-1} -a_{m-1} x_{m}} \Big( 0, 0, \dots, 0, - a_{m-1} x_{m} b_{m} + b_{m-1} a_mx_{m+1}, \dots, \\\\
	&- a_{m-1} x_{m} b_{l-2} + b_{m-1} a_{l-2}x_{l-1}, - a_{m-1} x_{m} b_{l-1} + b_{m-1} a_{l-1}, 0 \Big).
\end{aligned}
\]
Finally, we notice that, although one can define a manifold composed of contact points of multiplicity higher than $m$, a tangential vector field cannot be defined on it. In fact, let $M_{m+1}\subset \Sigma^{\tang}$ be the submanifold composed of the tangential points of multiplicity $m+1$, that is, $M_{m+1}=\eta_{m+1}^{-1}(0)$, where $\eta_{m+1}:\R^n\setminus\{\x:\,x_{m+1}=0\}\to\R^{m+1}$ is given by
\[
\begin{aligned}
	\eta_{m}(x_1, \dots, x_n)  =&(h({\bf x}), Z_+h({\bf x}), Z_+ ^2h({\bf x}), \dots, Z_+ ^{m-1}h({\bf x}), Z_+ ^{m}h({\bf x}) )\\
	=&(x_n, x_1, a_1x_2, \dots, a_1 \dots a_{m-2}x_{m-1}, a_1 \dots a_{m-1}x_{m}).
\end{aligned}
\]
Thus, for $p_{m+1}\in M_{m+1}$, one has $d\eta_{m+1}(p_{m+1})\, Z_+(p_{m+1}) = (0, \dots, 0,$\linebreak $ a_1 a_2 \dots  a_{m}x_{m+1})$ and $d\eta_{m+1}(p_{m+1})\, Z_-(p_{m+1})= (0,\dots, 0, a_1 a_2 \dots a_{m-2} b_{m-1},$ \linebreak $ a_1 a_2 \dots a_{m-1} b_{m})$. Thus, condition \eqref{cond} of Definition \ref{definicao-campo-tagencial} is satisfied provided that  
\[
a_1 a_2 \dots a_{m-2} b_{m-1} =0\quad\text{and}\quad a_{m}\, b_{m}x_{m+1}<0,
\] 
which contradicts the hypothesis $a_i\, b_j \neq 0, i=1, \dots, l-1$ and $j=1, \dots, m-1$ (recall that $l>m$).

\subsection{Example 3: A model of intermittent treatment of HIV}\label{sec aplicacoes}

Consider the PSVF $Z=(Z_+,Z_-):U\subset \R^3 \rightarrow \R^3$ where
\begin{equation}\label{equacao-campo-AIDS}
\begin{array}{ll}
Z_+  =&(-k x_1 x_3 + s-\alpha  x_1, k x_1 x_3 - \delta  x_2, \theta  x_2 - c x_3),\\
Z_-  =&(-(1-\eta_{RT}) k x_1 x_3 + s-\alpha  x_1, (1-\eta _{RT}) k x_1 x_3 - \delta  x_2, \\
&(1-\eta_{PI}) \theta  x_2 -c x_3).
\end{array}
\end{equation}
The PSVf above models the intermittent treatment of human immunodeficiency virus. For more details about the formal mathematical analysis of this model, see \cite{TiagoRonyDurvalLuiz-HIV}. Here, $x_2(t)$ denotes the infected cell population size at time $t$, $x_1(t)$ denotes the uninfected cell population size at time $t$, $x_3(t)$ denotes the concentration of infectious virus particles at time $t$, and the parameters $\eta_{RT}, \eta_{PI}, k, \alpha, \delta, \theta, c$ are positive real numbers related to the properties of the model and the treatment protocol of this disease. For biological reasons, it is assumed that
\begin{equation}\label{bio}
\alpha < \delta \,\, , \,\, k s \theta > c \alpha \delta \,\, \mbox{ and } \,\, \frac{s}{\delta} +  \frac{c (\delta - \alpha)}{k \theta} < C_T < min \Big\{ \frac{s}{\delta}, \frac{s}{\delta} +  \frac{c (\delta - \alpha)}{(\eta_{RT}-1)(\eta_{PI}-1)k \theta}\Big\}. 
\end{equation}

The switching manifold is given by $h(x_1,x_2,x_3)= x_1+x_2-C_T$, where $C_T$ is a positive real number that controls when the antiretroviral therapy will be triggered; more precisely, the therapy is triggered below this value and stopped above this value. The first Lie derivatives are given by
\[
\begin{array}{ll}
Z_+h(C_T-x_2, x_2, x_3)  =&-\alpha  C_T+s+x_2 (\alpha -\delta ),   \\
 Z_-h(C_T-x_2, x_2, x_3)  =&-\alpha  C_T+s+x_2 (\alpha -\delta ).
\end{array}
\]
Solving $Z_{\pm}h=0$ yields the curve
\[
\Sigma^{\tang}=\Big\{\Big(\dfrac{s-C_T \delta }{\alpha -\delta }, \dfrac{\alpha  C_T-s}{\alpha -\delta }, x_3\Big); x_3\geq 0\Big\}
\]
and, over $\Sigma^{\tang},$ there exist two cusp points given by: 
\[
\begin{array}{ll}
p_c^+   =&\Big(\dfrac{s-C_T \delta }{\alpha -\delta }, \dfrac{\alpha  C_T-s}{\alpha -\delta },\dfrac{\delta  (\alpha  C_T-s)}{k (s-C_T \delta )}\Big),\\\\
p_c^-    =&\Big(\dfrac{s-C_T \delta }{\alpha -\delta }, \dfrac{\alpha  C_T-s}{\alpha -\delta }, \dfrac{\delta  (\delta -\alpha ) (\alpha  C_T-s)}{(\eta _{RT}-1) k (\alpha -\delta ) (s- C_T \delta )}\Big).
\end{array}
\]

The submanifold $M_2 \subset \Sigma^{\tang}$ of tangential points of multiplicity $2$ is given by $\eta_2^{-1}(0)$, where  $\eta:\R^3\setminus\{p_c^+ ,p_c^- \}\to\R^2$ is given by 
\[
\eta_2(x_1, x_2, x_3)=(h(x_1, x_2, x_3), Z_+h(C_T-x_2, x_2, x_3)).
\]
Let $p=\Big(\dfrac{s-C_T \delta }{\alpha -\delta }, \dfrac{\alpha  C_T-s}{\alpha -\delta }, x_3\Big)\in\Sigma^{\tang}$. It can be noted that
\[
d\eta_2(p)\, Z_+(p)=\Big(0, k x_3 (s-C_T \delta )+\frac{\delta  (\delta -\alpha ) (\alpha  C_T-s)}{\alpha -\delta }\Big)
\]
and 
\[
d\eta_2(p)\, Z_-(p)=\Big(0, \frac{\delta  (\delta -\alpha ) (\alpha  C_T-s)}{\alpha -\delta }-(\eta_{RT}-1) k x_3 (s-C_T \delta )\Big).
\]
Thus, under assumptions \eqref{bio}, the condition \eqref{cond} of Definition \ref{definicao-campo-tagencial} is satisfied. In this case,
\[ \lambda^*(p) = \frac{-2 \alpha  C_T \delta + C_T \delta \eta_{RT} k x_3 - 2 C_T \delta  k x_3 - \eta_{RT} k s x_3 +2 k s x_3+2 \delta  s}{\eta_{RT} k x_3 (s- C_T \delta )}. \]

The tangential vector field \eqref{eq-campo-tangencial} is given by
\[
\begin{array}{ll} 
Z^{\tang}_{M_2}(x_3)  = &\Big(0,0,-\dfrac{\alpha  c x_3}{\alpha -\delta } + \dfrac{c \delta  x_3}{\alpha -\delta } - \dfrac{\alpha  C_T \eta_{PI} \theta }{\alpha -\delta } + \dfrac{\alpha  C_T \theta }{\alpha -\delta } \\\\
&+ \dfrac{\alpha  C_T \eta_{PI} \theta }{(\alpha -\delta ) \left(1-\frac{\delta  (s-C_T (\alpha +k x_3))+k s x_3}{\delta  (s-\alpha  C_T)-(\eta_{RT}-1) k x_3 (s-C_T \delta )}\right)}\\\\
&-\dfrac{\eta_{PI}\theta  s}{(\alpha -\delta ) \left(1-\frac{\delta  (s-C_T (\alpha +k x_3))+k s x_3}{\delta  (s-\alpha  C_T)-(\eta_{RT}-1) k x_3 (s-C_T \delta )}\right)} + \dfrac{\eta_{PI} \theta  s}{\alpha -\delta }-\dfrac{\theta  s}{\alpha -\delta }\Big).
\end{array}
\]
This expression coincides with the expression of the tangential vector field given in \cite{TiagoRonyDurvalLuiz-HIV}. Note that this tangential vector field possesses an equilibrium point 
\[
\begin{array}{ll}
p_2^*=&\Big(0,0, \frac{1}{2 c \eta_{RT} k (\alpha -\delta ) (C_T \delta -s)}\Big(-k \theta  (\eta_{PI}-\eta_{RT}) (s-\alpha  C_T) (s-C_T \delta ) +\sqrt{\Delta}\Big)\Big),
\end{array}
\]
where \[\Delta=k \theta  (s-\alpha  C_T)^2 (s-C_T \delta ) \left(4 c \delta  \eta_{PI} \eta_{RT} (\alpha -\delta )+k \theta  (\eta_{PI}-\eta_{RT})^2 (s-C_T \delta )\right).\]

\section{Regularized tangential vector field}\label{Secao-CampoTangencialRegularizado}

Let $Z=(Z_+,Z_-)$ be a PSVF. Let $\eta:V\subset U\to\R^m,$ with $m<n$ and $V\subset U$ open, be a smooth function is defined by $0\in\R^m$ as a regular value. Assume that $M = \eta^{-1}(0)\subset\Sigma^{\tang}$ satisfies condition \eqref{cond} of Definition \ref{definicao-campo-tagencial}. Following \cite{Novaes15}, the next result shows that the tangential sliding vector field $Z_{M}^{\tang}$ is locally conjugated to the reduced dynamics of a SP-Problem arisen from the regularization of $Z$, and restricted to a manifold contained in the slow set.

\begin{theorem}\label{teorema-regulariado}
For each $p\in M$, there exists a neighbourhood $D\subset \R^n$ of $p$ such that, for any $C^r$ (resp. continuous) transition function $\phi$, the $\phi$-regularization $Z_{\e}$ of $Z\big|_D$ can be written as a SP-Problem (according to Definition \ref{spproblem}) such that the reduced system has an invariant manifold $\CS^{\tang}$, which is contained in the slow set $\CS$ and is $C^r$-diffeomorphic (resp. homeomorphically) to $M\cap D$. In addition, the reduced system restricted to $\CS^{\tang}$ is $C^r$-conjugated (resp. to be topologically conjugated) to the tangential sliding vector field $Z_{M\cap D}^{\tang}$.
\end{theorem}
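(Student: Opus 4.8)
The plan is to pass to a convenient local chart and then to produce $\CS^{\tang}$ and the conjugacy by an explicit computation, since, as I note at the end, geometric singular perturbation theory does not apply off the shelf here. First I would fix $p\in M$ and choose coordinates $(x,y)\in\R\times\R^{n-1}$ on a neighbourhood $D$ of $p$ with $h(x,y)=x$, so $\Sigma=\{x=0\}$; because $h\equiv0$ on $M$ one has $\nabla h(q)\in\mathrm{span}\{\nabla\eta_1(q),\dots,\nabla\eta_m(q)\}$ for $q\in M$, and the chart may be refined so that $M\cap D=\{x=0\}\cap\{\tilde\eta(y)=0\}$ for a submersion $\tilde\eta\colon\R^{n-1}\to\R^{m-1}$, with $\eta=(x,\hat\eta)$ and $\hat\eta(0,\cdot)=\tilde\eta$. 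The crucial fact to record is that $Z_\pm h\equiv0$ on $M$: writing $\nabla h(q)=\sum_i c_i(q)\nabla\eta_i(q)$ gives $Z_\pm h(q)=\langle c(q),d\eta(q)Z_\pm(q)\rangle$, and since \eqref{cond} forces $d\eta(q)Z_+(q)$ and $d\eta(q)Z_-(q)$ to be antiparallel, $Z_+h(q)$ and $Z_-h(q)$ are of strictly opposite signs or both zero; the first possibility is ruled out by $q\in\Sigma^{\tang}$. Consequently, along $M$ one also has $d\eta(q)Z_\pm(q)=(0,d\tilde\eta(y)Z_\pm'(0,y))$, where $Z_\pm'\in\R^{n-1}$ denotes the block of components of $Z_\pm$ other than the $x$-component $Z_\pm^1=Z_\pm h$.

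Next I would write the $\phi$-regularization \eqref{regularization} in this chart and perform the blow-up $\xi=x/\e$. The $\e$-dependent change of variables $(x,y)\mapsto(\xi,y)$ turns $Z_\e$ into $\e\dot\xi=l(\xi,y,\e)$, $\dot y=m(\xi,y,\e)$, with
$$l(\xi,y,\e)=\tfrac{1+\phi(\xi)}{2}\,Z_+^1(\e\xi,y)+\tfrac{1-\phi(\xi)}{2}\,Z_-^1(\e\xi,y)$$
and $m$ the same convex combination of $Z_+'(\e\xi,y)$ and $Z_-'(\e\xi,y)$; this is exactly the form required in Definition \ref{spproblem}, with $l,m$ of class $C^r$ (resp. continuous) whenever $\phi$ is. The slow set is $\CS=\{l(\xi,y,0)=0\}$, and since $Z_\pm^1(0,y)=Z_\pm h(0,y)=0$ whenever $\tilde\eta(y)=0$ we get $l(\xi,y,0)\equiv0$ there, so $\CS$ contains $\{(\xi,y):\tilde\eta(y)=0\}$.

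The core step is to isolate the right invariant submanifold inside this (degenerate) slow set. By the last paragraph, $\la^*$ from \eqref{eq lambda star} satisfies $\la^*(0,y)=\frac{\|d\tilde\eta(y)Z_+'(0,y)\|-\|d\tilde\eta(y)Z_-'(0,y)\|}{\|d\tilde\eta(y)Z_+'(0,y)\|+\|d\tilde\eta(y)Z_-'(0,y)\|}$ for $\tilde\eta(y)=0$; it lies in $(-1,1)$ and, under \eqref{cond}, is smooth on $M\cap D$. Since $\phi$ restricts to an increasing bijection of $(-1,1)$ onto itself, define $\xi^*$ on $\{\tilde\eta(y)=0\}$ by $\phi(\xi^*(y))=-\la^*(0,y)$; then $\xi^*$ has the regularity of $\phi^{-1}$, i.e. $C^r$ (resp. continuous). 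Put $\CS^{\tang}:=\{(\xi,y):\tilde\eta(y)=0,\ \xi=\xi^*(y)\}$. Then $\CS^{\tang}\subset\CS$, and the projection $(\xi,y)\mapsto(0,y)$ is a $C^r$-diffeomorphism (resp. homeomorphism) of $\CS^{\tang}$ onto $M\cap D$. For invariance and to identify the reduced dynamics at once: \eqref{cond} on $M$ reads $d\tilde\eta(y)Z_-'(0,y)=-\tau(y)\,d\tilde\eta(y)Z_+'(0,y)$ with $\tau(y)=\|d\tilde\eta(y)Z_-'(0,y)\|/\|d\tilde\eta(y)Z_+'(0,y)\|>0$, and a direct computation shows $d\tilde\eta(y)\,m(\xi,y,0)=0$ exactly when $\phi(\xi)=\frac{\tau(y)-1}{\tau(y)+1}=-\la^*(0,y)$, i.e. exactly on $\CS^{\tang}$; hence along $\CS^{\tang}$ the reduced drift $\dot y=m(\xi,y,0)$ is tangent to $\{\tilde\eta=0\}$ and $\CS^{\tang}$ is invariant for the reduced system \eqref{reduced}. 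Finally, on $\CS^{\tang}$ one has $\tfrac12(1\pm\phi(\xi^*(y)))=\tfrac12(1\mp\la^*(0,y))$, so the induced vector field equals $\tfrac{1-\la^*(0,y)}{2}Z_+'(0,y)+\tfrac{1+\la^*(0,y)}{2}Z_-'(0,y)$, which by \eqref{eq-campo-tangencial} is precisely the non-$x$ block of $C((0,y),\la^*(0,y))=Z_M^{\tang}(0,y)$; since $Z_M^{\tang}$ is tangent to $M$ its $x$-component vanishes, so in this chart the reduced system on $\CS^{\tang}$ is identified, via the above diffeomorphism, with $Z_{M\cap D}^{\tang}$, giving the desired $C^r$- (resp. topological) conjugacy.

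The step I expect to be the main obstacle is the degeneracy of $\CS$ along $M$. Because $Z_\pm h\equiv0$ on $M$, one has $\partial_\xi l(\xi,y,0)=\tfrac12\phi'(\xi)\bigl(Z_+^1(0,y)-Z_-^1(0,y)\bigr)=0$ all along the fibre over $M$, so $\CS$ is not normally hyperbolic there; Fenichel's theorem cannot be used to produce $\CS^{\tang}$ or its reduced flow, the submanifold has to be built by hand as above, and ``invariance of $\CS^{\tang}$ for the reduced system'' must be read as tangency of the reduced drift to $\CS^{\tang}$ inside $\CS$ (equivalently, $\CS^{\tang}$ is the unique graph over $M\cap D$ lying in $\CS$ along which that drift is tangent to $M\cap D$). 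A secondary point requiring care is the purely continuous case: one must check that a continuous, strictly increasing transition function still has a continuous inverse on $(-1,1)$, so that $\xi^*$, $\CS^{\tang}$ and the conjugating homeomorphism are well defined, and that the conclusion is genuinely a conjugacy of the induced (continuous) flows.
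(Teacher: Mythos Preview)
Your argument is correct and follows essentially the same route as the paper: straighten $h$, blow up $x_n/\e$, read off the SP-problem, and then build $\CS^{\tang}$ by hand as the graph $\{\phi=\pm\lambda^*\}$ over $M$, checking that the reduced $y$-drift is tangent to $M$ there and coincides with $Z_M^{\tang}$. The only real difference is that the paper further straightens $\eta$ to $(x_{n-m+1},\dots,x_n)$, which makes the invariance check ($\dot v_j=Z_j^{\tang}=0$) a line shorter, whereas you keep a general submersion $\tilde\eta$ and verify $d\tilde\eta(y)\,m(\xi,y,0)=0$ directly; both are equivalent. Your explicit remark that $\partial_\xi l\equiv0$ along the fibres over $M$ (so Fenichel does not apply and ``invariance'' must be read as tangency of the reduced drift) is exactly the content of the paper's Remark following the theorem, and your sign $\phi(\xi^*)=-\lambda^*$ is the one consistent with the regularization formula \eqref{regularization} as stated.
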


\begin{proof}
First, notice that there exists a neighborhood $D\subset\R^n$ of $p$ and a local coordinate system for which 
$h(x_1, \dots, x_n)=x_n$ and $\eta(x_1, \dots, x_n)=(x_{n-m+1},\dots,x_n)$. In the given coordinates, we get
 \[ \Sigma^{\tang} =  \{p=(x_1, \dots, x_{n-m}, 0, \dots, 0)\} \subset\Sigma.\] 
By denoting $Z^{\pm}=(Z_1^{\pm},\dots ,Z_n^{\pm}),$ the tangential sliding vector field \eqref{eq-campo-tangencial} is written as
\begin{equation}\label{t1k}
Z^{\tang}_M(p)=C(p;\lambda^*(p))=  \frac{(1-\la^*(p))}{2}    Z^+(p)+\frac{(1+\la^*(p))}{2} Z^-(p).
\end{equation}
In what follows, we denote $Z^{\tang}_M=Z^{\tang}=(Z_1^{\tang},\dots ,Z_n^{\tang})$. Note that the trajectories of the tangential sliding vector field satisfy the differential system
\begin{equation}\label{t1f}
\begin{aligned}
\dot x_i  &=Z_i^{\tang}(p)= \frac{(1-\la^*(p))}{2}  Z_{i}^+(p)+\frac{(1+\la^*(p))}{2} Z_{i}^-(p),\quad i=1, 2, \dots,n-m\\
\dot x_j&=Z_j^{\tang}(p)=0,\quad j=n-m+1, \dots, n.
\end{aligned}
\end{equation}
Now, consider the $\phi$--regularization of $Z$, defined by \eqref{regularization} as
\[
Z_{\e}(x)=\frac{(1- \phi_{\e}(x_n))}{2}  Z^+(x)+ \frac{(1+ \phi_{\e}(x_n))}{2}  Z^-(x).
\]
Notice that the trajectories of $Z_{\e}$ satisfy the following differential system
\begin{equation}\label{t1r1}
\dot x_i= \frac{(1- \phi_{\e}(x_n))}{2}  Z_i^+(x)+ \frac{(1+ \phi_{\e}(x_n))}{2}  Z_i^-(x),\quad i=1, \dots, n.
\end{equation}
By applying the change of variables $u=(x_1,x_2,\ldots,x_{n-m})$, $v=(x_{n-m+1}, \dots, x_{n-1}),$  and $w=x_n/\e,$  for $\e>0$ small, the differential system \eqref{t1r1}  is written as the following singular perturbation SP-problem 
\begin{equation}\label{t1sp}
\begin{aligned}
\dot u_i=&\frac{(1-\phi(w))}{2} Z_i^+(u,v, \e w)+\frac{(1+\phi(w))}{2} Z_i^-(u,v, \e w), \quad i=1, \dots, n-m,\\
\dot v_j=&\frac{(1-\phi(w))}{2} Z_i^+(u,v, \e w)+\frac{(1+\phi(w))}{2} Z_i^-(u,v, \e w),\quad j=n-m+1, \dots, n-1,\\
\e \dot w=&\frac{(1-\phi(w))}{2} Z_n^+(u,v, \e w)+\frac{(1+\phi(w))}{2} Z_n^-(u,v, \e w),
\end{aligned}
\end{equation}
where $\e>0$ is the singular perturbation parameter. By taking $\e=0$, we get the reduced problem
\begin{equation}\label{t1sp1}
\begin{aligned}
\dot u_i=&\frac{(1-\phi(w))}{2} Z_i^+(u,v, 0)+\frac{(1+\phi(w))}{2} Z_i^-(u,v, 0), \quad i=1, \dots, n-m,\\
\dot v_j=&\frac{(1-\phi(w))}{2} Z_j^+(u,v, 0)+\frac{(1+\phi(w))}{2} Z_j^-(u,v,0),\quad j=n-m+1, \dots, n-1,\\
0=&\frac{(1-\phi(w))}{2} Z_n^+(u,v, 0)+\frac{(1+\phi(w))}{2} Z_n^-(u,v, 0)=: K(u,v,w),
\end{aligned}
\end{equation}
which describes the dynamics on the ``slow'' timescale $t$ (for standard concepts of singularly perturbed or slow-fast systems see \cite{fenichel,jones}). This dynamics inhabits the {\it slow set} $\CS:=\{(u,v,w):\, K(u,v,w)=0\}$. 

Notice that, in the $(u,v,w)$-coordinate system, $\Sigma^{\tang}=D\cap\{(u,0,0):\, u\in\R^{n-m}\}$. Thus, by taking $w^*(u)=\phi^{-1}(\lambda^*(u,0,0)),$ we have that
\[
\begin{aligned}
K(u,0,w^*(u))=&\frac{(1-\lambda^*(u,0,0))}{2} Z_n^+(u,0, 0)+\frac{(1+\lambda^*(u,0,0))}{2} Z_n^-(u,0, 0)\\
=&Z_n^{\tang}(u,0,0)=0.
\end{aligned}
\]
Hence, $\CS^{\tang}:=\{(u,0,w^*(u))\}$ is a manifold contained in the slow set $\CS$. In addition, for $j=n-m+1, \dots, n-1,$ we have that
\[
\frac{(1-\phi(w^*(u)))}{2} Z_j^+(u,0, 0)+\frac{(1+\phi(w^*(u)))}{2} Z_j^-(u,0,0)=Z_j^{\tang}(u,0,0)=0,
\]
which implies that the flow of the reduced problem \eqref{t1sp1} is invariant over the manifold $\CS^{\tang}$ and is written as
\begin{equation}\label{equacao-sistema-lento}
\begin{aligned}
\dot u_i=&\frac{(1-\lambda^*(u,0,0))}{2} Z_i^+(u,0, 0)+\frac{(1+\lambda^*(u,0,0))}{2} Z_i^-(u,0, 0), \quad i=1, \dots, n-m,\\
\dot v_j=&0,\quad j=n-m+1, \dots, n-1,\\
w=&w^*(u).
\end{aligned}
\end{equation}
Finally, by defining $H:\Sigma^{\tang}\rightarrow \CS^{\tang}$ as
\[
H(p)=(u,0,w^*(u))=(u,0,\phi^{-1}\circ\la^*(p)),\,\, p=(u,0,0),
\]
we can see that the manifold $\CS^{\tang}$ is  $C^r$--diffeomorphic (resp. homeomorphic) to $\Sigma^{\tang}$ provided that $\phi$ is a $C^r$ (resp. continuous) transition function. This completes the proof of the first part of the theorem.

Finally, denote the solution of the differential system \eqref{t1f} starting at $p=(u,0,0)\in\Sigma^{\tang}$ by $t\mapsto x_t(p):=(u(t,p),0,0)$. Similarly, denote the solution of the reduced problem \eqref{equacao-sistema-lento} starting at $H(p)$ by $t\mapsto X_t(H(p))$. Since the first $n-1$ equations of the differential systems \eqref{t1f} and \eqref{equacao-sistema-lento} coincide and do not depend on the last one, we have that
\[
X_t(H(p))= \Big(u(t,p),0, w^*(u(t,p))\Big)=H(x_t(p)).
\]
Consequently, the reduced dynamics restricted to $\CS^{\tang}$  is $C^r$--conjugated (resp. topologically conjugated) to the tangential sliding vector field $Z_{M\cap D}^{\tang}$ provided that $\phi$ is a $C^r$ (resp. continuous) transition function. This completes the proof of the second part of the theorem.
\end{proof}

In Figure \ref{fig regularizacao} we illustrate the double tangential set and the manifold $\CS^{\tang}$. 
\begin{figure}[H]
\begin{center}
\begin{overpic}[width=5.3in]{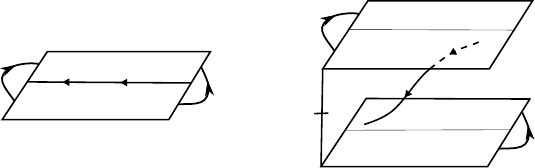}
%\begin{overpic}[grid,tics=10,width=5.3in]{fig-regularizacao.pdf}
\put(16,2){(a)} \put(57,9){$0$} \put(57,18){$\varepsilon$} \put(55,0){$- \varepsilon$} \put(20,17){$\Sigma^{\tang}$}
\put(80,15){slow manifold} \put(75,-3){(b)} \put(74,9){$\CS^{\tang}$}
\end{overpic}
\end{center}
\vs
\caption{Regularization. In (a) we get a PSVF with a double tangency set and, in (b), its regularization.}\label{fig regularizacao}
\end{figure}

\begin{remark} The invariant manifold $\CS^{\tang}$ of the reduced problem \eqref{t1sp1} is never normally hyperbolic. Indeed,
\[
\dfrac{\p K}{\p w}(u,0,w^*(u))=\phi'(w^*(u))\frac{Z_n^-(u,0, 0)-Z_n^+(u,0, 0)}{2}=0,
\]
for every $u$ such that $(u,0,0)\in D$. Therefore, Fenichel's theory cannot be applied to study the persistence of $\mathcal{S}^{\tang}$ as an invariant manifold of \eqref{t1sp} for $\epsilon>0$ small. Addressing this problem requires the use of blow-up methods (see, for instance, \cite{Dumortier-1977,Seidenberg-1968} and also \cite{NovRond}), which is not in the scope of the present study.
\end{remark}

\begin{remark}
In \cite{Panazzolo-Paulo2017} the notion of solutions of Filippov systems has been extended as being limiting trajectories of some regularization. In this regard, Theorem \ref{teorema-regulariado} shows that the sliding mode provided by Definition \ref{definicao-campo-tagencial}, arising from the Filippov convention, also corresponds to solutions in the context considered by \cite{Panazzolo-Paulo2017}.
\end{remark}

In what follows, we present an example illustrating the equivalence between the tangential vector field and the invariant dynamics on the slow manifold of the $\phi$-regularized vector field.

\begin{example}
Consider the PSVF presented in Example \ref{Exemplo-Dobra-dobra} given by \linebreak $Z=(Z_+, Z_-): U \subset \R^4 \rightarrow \R^4$ where 
\[
\begin{array}{ll}
Z_+  =&(a_1, a_2, a_3, a_4 x_1)\\
Z_-  =&(b_1, b_2, b_3, b_4 x_1).
\end{array}
\]
Recall that the switching manifold is given by $h(x_1, \dots, x_4)=x_4$. By \eqref{t1r1} and con\-si\-de\-ring the notation presented in the proof of Theorem \ref{teorema-regulariado} given by $x_1=v_1, x_2=u_1, x_3=u_2, x_4=w \e$, the regularized vector field is 
\begin{equation}\label{exemplo-reg-dobra-dobraR4}
\begin{array}{ll}
\dot{v_1}  =& (a_1-b_1) \phi(w)+b_1,\\
\dot{u_1}  =&(a_2-b_2) \phi (w)+b_2,\\
\dot{u_2}  =&(a_3-b_3) \phi (w)+b_3,\\
\e \dot{w}  =&v_1 ((a_4-b_4) \phi (w)+b_4).
\end{array}
\end{equation}
In the limit $\e=0$, considering the expression of $\lambda^*(p)$ provided in Theorem \ref{theo1}, we get 
\[
\lambda^*(p) = -1+\dfrac{2\left| a_1\right|}{\left| a_1\right| +\left| b_1\right| }
\]
and the existence of a slow manifold $\mathcal{S}$, given by the restriction $v_1=0$ and $w=\phi^{-1}(\la^*(p))$. Considering the restriction of \eqref{exemplo-reg-dobra-dobraR4} on $\mathcal{S}$, we obtain the reduced problem which, in this case, coincides with the tangential vector field given in \eqref{Exemplo-CampoTangencial-DobraR4}.
\end{example}
\section*{Acknowledgements}

Tiago Carvalho is partially supported by S\~{a}o Paulo Research Foundation (FAPESP) grants  \# 2019/10269-3, \# 2021/12395-6, and \# 2022/02819-6 and by Conselho Nacional de Desenvolvimento Cient\'ifico e Tecnol\'ogico (CNPq), grants 305026/2020-8  and 309378/2023-0.

Douglas Duarte Novaes is partially supported by S\~{a}o Paulo Research Foundation (FAPESP) grants \# 2018/13481-0, \# 2019/10269-3, and \# 2022/09633-5 and by Conselho Nacional de Desenvolvimento Cient\'{i}fico e Tecnol\'{o}gico (CNPq) grant 309110/2021-1. 

Durval Jos\'e Tonon is partially supported by Conselho Nacional de Desenvolvimento Cient\'{i}fico e Tecnol\'{o}gico (CNPq) grant \# 310362/2021-0. 

\bibliographystyle{abbrv}
\bibliography{ReferenciasTangentialSlidingVF}

\end{document}